\newtheorem{thm}{Theorem}[section]
\newtheorem{lem}[thm]{Lemma}
\newtheorem{prop-def}[thm]{Proposition-Definition}
\theoremstyle{definition}
\newtheorem{defi}[thm]{Definition}
\theoremstyle{remark}
\newtheorem{exm}[thm]{\bf Example}
\numberwithin{equation}{section}
\numberwithin{figure}{section}
\newcommand{\A}{\mathcal{A}}
\def\la{\lambda}
\def\A{\mathcal{A}}
\def\PS{\mathbb{S}}
\def\diag{{\rm diag}}
\def\x{{\mathbf x}}
\def\y{{\mathbf y}}
\def\v{{\mathbf v}}
\def\C{\mathbb{C}}
\def\Z{\mathbb{Z}}
\def \PV{\mathbb{V}}
\def\I{\mathcal{I}}
\def\B{\mathcal{B}}
\def\diag{{\rm diag}}
\def \A{\mathcal{A}}
\def \L{\mathcal{L}}
\def \Q{\mathcal{Q}}
\def \Dg{\mathfrak{D}}
\def \D{\mathcal{D}}
\def \sp{\mbox{\rm supp}}
\def \inc{\mbox{\rm I}}
\def \lamin{\lambda_{\min}}
\begin{document}
\title[Eigenvectors of $Z$-tensors]
{Eigenvectors of $Z$-tensors associated with least H-eigenvalue with application to hypergraphs}

\author[Y.-Z. Fan]{Yi-Zheng Fan$^*$}
\address{School of Mathematical Sciences, Anhui University, Hefei 230601, P. R. China}
\email{fanyz@ahu.edu.cn}
\thanks{$^*$The corresponding author.
This work was supported by National Natural Science Foundation of China (Grant No. 11871073, 11771016, 11871071).}

\author[Y. Wang]{Yi Wang}
\address{School of Mathematical Sciences, Anhui University, Hefei 230601, P. R. China}
\email{wangy@ahu.edu.cn}

\author[Y.-H. Bao]{Yan-Hong Bao}
\address{School of Mathematical Sciences, Anhui University, Hefei 230601, P. R. China}
\email{baoyh@ahu.edu.cn}

%\author[J.-C. Wan]{Jiang-Chao Wan}
%\address{School of Mathematical Sciences, Anhui University, Hefei 230601, P. R. China}
%\email{1500256209@qq.com}
%
%\author[M. Li]{Min Li}
%\address{School of Mathematical Sciences, Anhui University, Hefei 230601, P. R. China}
%\email{1736808193@qq.com}
%
%\author[Z. Zhu]{Zhu Zhu}
%\address{School of Mathematical Sciences, Anhui University, Hefei 230601, P. R. China}
%\email{2937242741@qq.com}

%\subjclass[2010]{}
%\date{\today}

\subjclass[2000]{Primary 15A18, 05C65; Secondary 13P15, 14M99}

%\date{January 1, 2001 and, in revised form, June 22, 2001.}

%\dedicatory{This paper is dedicated to our advisors.}

\keywords{$Z$-Tensor, eigenvector, hypergraph, stabilizing index}

\begin{abstract}
Unlike an irreducible $Z$-matrices, a weakly irreducible $Z$-tensor $\A$ can have more than one eigenvector associated with the least H-eigenvalue.
We show that there are finitely many eigenvectors of $\A$ associated with the least H-eigenvalue.
If $\A$ is further combinatorial symmetric, the number of such eigenvectors can be obtained explicitly by the Smith normal form of the incidence matrix of $\A$.
When applying to a connected uniform hypergraph $G$, we prove that the number of Laplacian eigenvectors of $G$ associated with the zero eigenvalue
is equal to the the number of adjacency eigenvectors of $G$ associated with the spectral radius, which is also equal to
the number of signless Laplacian eigenvectors of $G$ associated with the zero eigenvalue if zero is an signless Laplacian eigenvalue.
\end{abstract}

\maketitle

\section{Introduction}
A real \emph{tensor} (also called \emph{hypermatrix}) $\A=(a_{i_{1} i_2 \ldots i_{m}})$ of order $m$ and dimension $n$ refers to a
 multiarray of entries $a_{i_{1}i_2\ldots i_{m}}\in \mathbb{R}$ for all $i_{j}\in [n]:=\{1,2,\ldots,n\}$ and $j\in [m]$, which can be viewed to be the coordinates of the classical tensor under an orthonormal basis.
 %In this paper, we mainly consider complex tensors, i.e. $\mathbb{F}=\C$.
For a vector $\x=(x_1, \cdots, x_n)\in \C^n$, $\A\x^{m-1}\in \C^n$, which is defined as
\begin{align*}
(\A\x^{m-1})_i=\sum_{i_2,\cdots, i_m\in [n]}a_{ii_2\cdots i_m}x_{i_2}\cdots x_{i_m}, i\in [n].
\end{align*}
Let $\I=(i_{i_1i_2\cdots i_m})$ be the \emph{identity tensor} of order $m$ and dimension $n$, that is,
$i_{i_1i_2\cdots i_m}=1$ if $i_1=i_2=\cdots=i_m$ and $i_{i_1i_2\cdots i_m}=0$ otherwise.

\begin{defi}[\cite{Lim,Qi,CPZ2}] Let $\A$ be an $m$-th order $n$-dimensional tensor.
For some $\lambda \in \mathbb{C}$, if the polynomial system $(\lambda \mathcal{I}-\A)x^{m-1}=0$, or equivalently $\A x^{m-1}=\lambda x^{[m-1]}$, has a solution $x\in \mathbb{C}^{n}\backslash \{0\}$,
then $\lambda $ is called an \emph{eigenvalue} of $\A$ and $x$ is an \emph{eigenvector} of $\A$ associated with $\lambda$,
where $x^{[m-1]}:=(x_1^{m-1}, x_2^{m-1},\ldots,x_n^{m-1})$.
\end{defi}

For a real tensor $\A$, an eigenvalue $\la$ of $\A$ is called an \emph{H-eigenvalue} if
there is a real eigenvector associated with $\la$, implying that $\la$ is real.
Denote by $\lamin(\A)$ the least H-eigenvalue of $\A$, and $\rho(\A)$ the \emph{spectral radius} of $\A$ (i.e. the largest modulus of the eigenvalues of $\A$).
Let $\mathbb{P}^{n-1}$ be the complex projective spaces of dimension $n-1$.
Consider the projective variety
$$\PV_\la=\PV_\la(\A)=\{\x \in \mathbb{P}^{n-1}: \A\x^{m-1}=\la \x^{[m-1]}\}.$$
which is called the \emph{projective eigenvariety} of $\A$ associated with $\la$ \cite{FBH}.
In this paper the number of eigenvectors of $\A$ is considered in $\PV_\la(\A)$.

By the Perron-Frobenius theorem of nonnegative tensors \cite{CPZ,FGH,YY1,YY2,YY3},
for an irreducible (or weakly irreducible) nonnegative tensor $\A$, the spectral radius $\rho(\A)$ is an eigenvalue of $\A$ associated with a unique nonnegative eigenvector (or positive eigenvector) up to a scalar, which is called the \emph{Perron vector} of $\A$.
In \cite{FHB} the authors investigate the spectral symmetry of $\A$ by using the eigenvalues with modulus $\rho(\A)$,
which generalizes some spectral properties of nonnegative irreducible matrices.
But, different from the matrices case, $\A$ can have more than one eigenvector associated with $\rho(\A)$, including the Perron vector.

$Z$-matrices and $M$-matrices are the generalization of nonnegative matrices.
Recently they were generalized to $Z$-tensors and $M$-tensors respectively \cite{ZQZ,DQW}.

\begin{defi}[\cite{ZQZ,DQW}]\label{ZM}
A real tensor $\A$ is called a \emph{$Z$-tensor} if all of its off-diagonal entries are non-positive, or equivalently it can be written as
\begin{equation}\label{ZMe}
\A =s \I -\B,
\end{equation}
where $s>0$ and $\B$ is nonnegative.
If $s \ge \rho(\B)$, then $\A$ is called an \emph{$M$-tensor}; if $s > \rho(\B)$, then $\A$ is called a \emph{nonsingular $M$-tensor} or \emph{strong $M$-tensor}.
\end{defi}

Zhang et al. \cite{ZQZ} showed that the minimum real part of all eigenvalues an $M$-tensor is the least H-eigenvalue.
Some characterization of a $Z$-tensor being an $M$-tensor is given in \cite{ZQZ, DQW}.
%Ding et al. \cite{DQW} presented some equivalent conditions for a $Z$-tensor to be a nonsingular $M$-tensor.
%Other related results of $Z$-tensors can be found in \cite{DQW,HWXC}.
Most of these results are generalization of $M$-matrices.
Let $\A$ be a weakly irreducible $Z$-tensor as in (\ref{ZMe}).
Then $ \PV_{\lamin(\A)}=\PV_{\rho(\B)}$.
As $\B$ is nonnegative and weakly irreducible, it can have more than one eigenvector associated with $\rho(\B)$, which implies that
$\A$ can have more than one eigenvector associated with $\lamin(\A)$.
This is different from the case of irreducible $Z$-matrices.

In this paper, we show that for a weakly irreducible $Z$-tensor $\A$, $\PV_{\lamin(\A)}$ is a finite abelian group, and admits a $\Z_m$-module if $\A$ is further combinatorial symmetric.
For the latter case, the cardinality of $\PV_{\lamin(\A)}$ can obtained explicitly by solving the Smith normal form of the incidence matrix of $\A$.
For a connected uniform hypergraph $G$, we show that $\PV_0(\L(G))$ has the same cardinality as $\PV_{\rho(\A(G))}$,
which is also equal to that of $\PV_0(\Q(G))$ if $0$ is an eigenvalue of $\Q(G)$, where $\A(G)$, $\L(G)$ and $\Q(G)$ are the adjacency, Laplaican and
signless Laplacian of $G$ respectively.

\section{Preliminaries}
Let $\A$ be an $m$-th order $n$-dimensional real tensor.
$\A$ is called \textit{symmetric} if its entries are invariant under any permutation of their indices.
The irreducibility or weakly irreducibility of a tensor can be referred to \cite{CPZ,FGH}.
The {\it support} of $\A$, denoted by $\sp(\A)=(s_{i_1i_2\ldots i_m})$, is defined as a tensor with same order and dimension as $\A$, such that $s_{i_1\ldots i_m}=1$ if $a_{i_1\ldots i_m} \ne 0$,
and $s_{i_1\ldots i_m}=0$ otherwise.
$\A$ is called {\it combinatorial symmetric} if $\sp(\A)$ is symmetric.
Let $\A$ be a combinatorial symmetric tensor of order $m$ and dimension $n$.
Set $$E(\A)=\{(i_1, i_2, \cdots, i_m)\in [n]^m: a_{i_1i_2\cdots i_m}\neq 0, 1\le i_1\le \cdots \le i_m \le n\}.$$
Define
\[b_{e,j}=|\{k: i_k=j, e=(i_1, i_2, \cdots, i_m) \in E(\A), k \in [m]\}|\]
and obtain an $|E(\A)|\times n$ matrix $\inc_\A=(b_{e,j})$, called the \emph{incidence matrix} of $\A$.

A \emph{hypergraph} $G=(V(G),E(G))$ consists of a vertex set  $V(G)=\{v_1, v_2, \ldots, v_n\}$
and an edge set $E(G)=\{e_1, e_2, \ldots, e_{l}\}$, where $e_{j}\subseteq V(G)$ for $j \in [l]$.
If $|e_{j}|=m$ for each $j\in [l]$, then $G$ is called an {\it $m$-uniform} hypergraph.
The {\it adjacency tensor} of an $m$-uniform hypergraph $G$ is defined to be $\mathcal{A}(G)=(a_{i_{1}i_{2}\ldots i_{m}})$,
where $a_{i_{1}i_{2}\ldots i_{m}}=\frac{1}{(m-1)!}$ if $\{v_{i_{1}},v_{i_{2}},\ldots,v_{i_{m}}\} \in E(G)$, and is $0$ otherwise \cite{CD}.
Let $\mathcal{D}(G)$ be an $m$-th order $n$-dimensional diagonal tensor,
    where $d_{i\ldots i}=d_{v_i}$, the degree of the vertex $v_i$, for each $i \in [n]$.
Then $\L(G)=\mathcal{D}(G)-\A(G)$ and $\Q(G)=\mathcal{D}(G)+\A(G)$ are called the {\it Laplacian tensor} and the {\it signless Laplacian tensor} of $G$, respectively \cite{Qi3}.
The adjacency, Laplacian or signless Laplacian tensor of $G$ is symmetric, and it is weakly irreducible if and only if $G$ is connected \cite{PZ, YY3}.
The \emph{incidence matrix} of $G$, denoted by $\inc_G=(b_{e,v})$, coincides with that of $\A(G)$, namely
$b_{e,v}=1$ if $v\in e$, and $b_{e,v}=0$ otherwise.

For a matrix $B \in \Z_m^{k \times n}$,
there exist two invertible matrices $P \in \Z_m^{k \times k}$ and $Q \in \Z_m^{n \times n}$ such that
\begin{equation} \label{smith}
PBQ=\begin{pmatrix}
d_1 & 0 & 0 &  & \cdots & & 0\\
0 & d_2 & 0 &  & \cdots & &0\\
0 & 0 & \ddots &  &  & & 0\\
\vdots &  &  & d_r &  & & \vdots\\
 & & & & 0 & & \\
  & & & &  & \ddots & \\
0 &  &  & \cdots &  & &0
\end{pmatrix},
\end{equation}
where $r \ge 0$, $ 1 \le d_i \le m-1$, $d_i | d_{i+1}$ for $i \in [r-1]$, and $d_i |m$ for all $i \in [r]$.
The matrix in (\ref{smith}) is called the {\it Smith normal form} of $B$ over $\Z_m$.

Let $\A$ be a tensor of order $m$ and dimension $n$.
Define
\begin{equation} \label{Dg}
\Dg^{(0)}=\Dg^{(0)}(\A)=\{D: \A=D^{-(m-1)}\A D, d_{11}=1\},
\end{equation}
where $D$ is an $n \times n$ invertible diagonal matrix and the product is defined as in \cite{Shao}.

\begin{defi}[\cite{FBH}]
For a general tensor $\A$, the cardinality of $\Dg^{(0)}(\A)$, denoted by $s(\A)$, is called the \emph{stabilizing index} of $\A$.
\end{defi}

By \cite[Lemma 2.5(1)]{FBH}, $\Dg^{(0)}(\A)$ is an abelian group under the usual matrix multiplication,
which is determined by the support of $\A$ by \cite[Lemma 2.6]{FBH}.
Suppose that $\A$ is a nonnegative weakly irreducible tensor.
By \cite[Theorem 3.7]{YY3}, for each $\y \in \PV_{\rho(\A)}$, $|\y|=:\v_p$ is the unique positive Perron vector of $\A$.
Therefore, we can assume each $\y \in \PV_{\rho(\A)}$ satisfies $y_1=1$.
Define
\begin{equation}\label{Dy}
D_\y=\diag\left(y_1/|y_1|,\ldots,y_n/|y_n|\right),
\end{equation}
and a quasi-Hadamard product $\circ$ in $\PV_{\rho(\A)}$ as follows:
\begin{equation}\label{product} \y \circ \hat{\y}:=D_\y D_{\hat{\y}} \v_p.\end{equation}

\begin{lem}[\cite{FBH}, Lemma 2.5, Lemma 3.1] \label{VtoD}
Let $\A$ be a nonnegative weakly irreducible tensor.
Then the following results hold.
\begin{enumerate}
\item $\Dg^{(0)}(\A)=\{D_{\y}: \y \in \PV_{\rho(\A)}, y_1=1\}$, and hence $s(\A)=|\PV_{\rho(\A)}|$.

\item $(\PV_{\rho(\A)},\circ)$ is an abelian group isomorphism to $\Dg^{(0)}(\A)$.
\end{enumerate}
\end{lem}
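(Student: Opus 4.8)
The plan is to construct an explicit bijection between $\PV_{\rho(\A)}$ and $\Dg^{(0)}(\A)$ and then to transport the group structure along it. Normalize the positive Perron vector $\v_p$ of $\A$ so that $(\v_p)_1=1$; by \cite[Theorem 3.7]{YY3} every $\y\in\PV_{\rho(\A)}$ satisfies $|\y|=\v_p$, and since all entries of $\v_p$ are positive (weak irreducibility), $\y$ has a unique representative with $y_1=1$, which then still has $|\y|=\v_p$, so $D_\y$ in (\ref{Dy}) is well defined. Unwinding the product of \cite{Shao}, a diagonal matrix $D=\diag(d_1,\ldots,d_n)$ lies in $\Dg^{(0)}(\A)$ if and only if $d_1=1$ and $d_{i_1}^{m-1}=d_{i_2}\cdots d_{i_m}$ for every $(i_1,i_2,\ldots,i_m)$ with $a_{i_1i_2\cdots i_m}\neq 0$.

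First I would show $\y\mapsto D_\y$ sends $\{\y\in\PV_{\rho(\A)}:y_1=1\}$ into $\Dg^{(0)}(\A)$. Write $y_j=(\v_p)_j\,\omega_j$ with $\omega_j=y_j/|y_j|$ of modulus $1$, so $\omega_1=1$; the $i$-th eigenvector equation becomes $\sum_{i_2,\ldots,i_m}a_{ii_2\cdots i_m}(\v_p)_{i_2}\omega_{i_2}\cdots(\v_p)_{i_m}\omega_{i_m}=\rho(\A)(\v_p)_i^{m-1}\omega_i^{m-1}$. Since $\A\v_p^{m-1}=\rho(\A)\v_p^{[m-1]}$ and the quantities $a_{ii_2\cdots i_m}(\v_p)_{i_2}\cdots(\v_p)_{i_m}$ are nonnegative with sum $\rho(\A)(\v_p)_i^{m-1}$, taking absolute values forces equality in the triangle inequality
\[\Bigl|\sum_{i_2,\ldots,i_m}a_{ii_2\cdots i_m}(\v_p)_{i_2}\omega_{i_2}\cdots(\v_p)_{i_m}\omega_{i_m}\Bigr|\le\sum_{i_2,\ldots,i_m}a_{ii_2\cdots i_m}(\v_p)_{i_2}\cdots(\v_p)_{i_m}.\]
Hence all nonzero summands on the left share a common argument, necessarily that of the nonzero right-hand side $\rho(\A)(\v_p)_i^{m-1}\omega_i^{m-1}$, i.e. $\arg(\omega_i^{m-1})$; combined with $|\omega_{i_2}\cdots\omega_{i_m}|=1$ this yields $\omega_{i_2}\cdots\omega_{i_m}=\omega_i^{m-1}$ whenever $a_{ii_2\cdots i_m}\neq 0$. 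Thus $D_\y\in\Dg^{(0)}(\A)$.

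Conversely, for $D=\diag(d_1,\ldots,d_n)\in\Dg^{(0)}(\A)$ I would first verify $|d_j|=1$ for all $j$: taking moduli in $d_{i_1}^{m-1}=d_{i_2}\cdots d_{i_m}$ shows the positive vector with entries $|d_j|(\v_p)_j$ is again an eigenvector of $\A$ for $\rho(\A)$, hence a positive multiple of $\v_p$ by uniqueness of the Perron vector, and comparing first coordinates forces that multiple to be $1$. A direct substitution using the same relations then gives $\A(D\v_p)^{m-1}=\rho(\A)(D\v_p)^{[m-1]}$, so $D\v_p\in\PV_{\rho(\A)}$; it has first coordinate $1$ and satisfies $D_{D\v_p}=D$, while $D_\y\v_p=\y$ for any $\y$ with $|\y|=\v_p$. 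So $\y\mapsto D_\y$ and $D\mapsto D\v_p$ are mutually inverse, which proves part (1); since every point of $\PV_{\rho(\A)}$ has a unique representative with $y_1=1$, this also gives $s(\A)=|\Dg^{(0)}(\A)|=|\PV_{\rho(\A)}|$.

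For part (2) I would transport the group structure: $\Dg^{(0)}(\A)$ is an abelian group under matrix multiplication by \cite[Lemma 2.5(1)]{FBH}, and since the inverse bijection is $D\mapsto D\v_p$, the induced operation on $\PV_{\rho(\A)}$ is $\y\circ\hat{\y}=(D_\y D_{\hat{\y}})\v_p=D_\y D_{\hat{\y}}\v_p$, which is precisely (\ref{product}), with identity $I\v_p=\v_p$. Hence $(\PV_{\rho(\A)},\circ)$ is an abelian group isomorphic to $\Dg^{(0)}(\A)$ via $\y\mapsto D_\y$. The only genuine work is the triangle-inequality rigidity in the forward direction and the modulus-one observation $|d_j|\equiv 1$ in the converse; the remainder is routine computation with the product of \cite{Shao} and the Perron-Frobenius uniqueness recalled above.
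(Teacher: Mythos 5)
Your proof is correct. Note that the paper does not prove Lemma \ref{VtoD} itself but imports it from \cite{FBH} (Lemmas 2.5 and 3.1), and your reconstruction follows essentially the same route as that source: the Yang--Yang fact that $|\y|=\v_p$, the equality case of the triangle inequality yielding the phase relations $\omega_{i_2}\cdots\omega_{i_m}=\omega_{i_1}^{m-1}$ on the support (i.e.\ $D_\y\in\Dg^{(0)}(\A)$), and the mutually inverse maps $\y\mapsto D_\y$, $D\mapsto D\v_p$ transporting the group structure of $\Dg^{(0)}(\A)$ to the quasi-Hadamard product $\circ$.
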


Further assume $\A$ is also combinatorial symmetric of order $m$.
By \cite[Lemma 2.5(3)]{FBH}, $D^m=\I$ for each $D \in \Dg^{(0)}$.
Hence for each $\y \in \PV_{\rho(\A)}$, $\y^{\circ m}=D_{\y}^m \v_p=\v_p$ (the identity), which implies that
$(\PV_{\rho(\A)}, \circ)$ admits a $\Z_m$-module.
Define
\begin{equation}\label{ps0} \PS_0(\A)=\{\x \in \Z_m^n: \inc_\A\x=0 \hbox{~over~} \Z_m, x_1=0\},\end{equation}
where $\inc_\A$ is the incidence matrix of $\A$.

\begin{thm}[\cite{FBH}]\label{VtoS}
Let $\A$ be a nonnegative combinatorial symmetric weakly irreducible tensor of order $m$.
Then $\PV_{\rho(\A)}$ is $\Z_m$-module isomorphic to $\PS_0(\A)$.
\end{thm}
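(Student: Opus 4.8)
The plan is to route the isomorphism through the stabilizer group $\Dg^{(0)}(\A)$. By Lemma~\ref{VtoD}, $(\PV_{\rho(\A)},\circ)$ is isomorphic as an abelian group to $\Dg^{(0)}(\A)$, and under this isomorphism the $\Z_m$-module structure on $\PV_{\rho(\A)}$ corresponds to the natural $\Z_m$-action $k\cdot D=D^{k}$ on $\Dg^{(0)}(\A)$, which is well defined because $D^{m}=\I$ for every $D\in\Dg^{(0)}(\A)$ by \cite[Lemma 2.5(3)]{FBH}. Hence it suffices to produce a $\Z_m$-module isomorphism $\Phi\colon\Dg^{(0)}(\A)\to\PS_0(\A)$.

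Fix a primitive $m$-th root of unity $\omega$. For $D=\diag(d_1,\dots,d_n)\in\Dg^{(0)}(\A)$ we have $d_j^{m}=1$ for all $j$ and $d_1=1$, so there are unique $c_1,\dots,c_n\in\Z_m$ with $c_1=0$ and $d_j=\omega^{c_j}$; define $\Phi(D)=(c_1,\dots,c_n)$. Clearly $\Phi$ is an injective group homomorphism of $\Dg^{(0)}(\A)$ into $\Z_m^{n}$ with image inside $\{x\in\Z_m^n:x_1=0\}$, and $\Phi(D^{k})=k\,\Phi(D)$, so $\Phi$ is $\Z_m$-linear. The heart of the proof is to identify the image of $\Phi$ with $\PS_0(\A)$. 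Unwinding the Shao product $D^{-(m-1)}\A D$ entrywise, the identity $\A=D^{-(m-1)}\A D$ is equivalent to $d_{i_1}^{-(m-1)}d_{i_2}\cdots d_{i_m}=1$ for every tuple $(i_1,\dots,i_m)$ with $a_{i_1\cdots i_m}\ne0$. Substituting $d_j=\omega^{c_j}$ and using $-(m-1)\equiv1\pmod m$, this becomes $c_{i_1}+c_{i_2}+\cdots+c_{i_m}\equiv0\pmod m$. Since $\A$ is combinatorial symmetric its nonzero entries occur in full permutation orbits, each orbit represented by a unique $e=(i_1,\dots,i_m)\in E(\A)$, and the left-hand sum depends only on the multiset $\{i_1,\dots,i_m\}$, hence equals $\sum_{j\in[n]}b_{e,j}c_j$, which is the $e$-th coordinate of $\inc_\A(c_1,\dots,c_n)$. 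Therefore $D\in\Dg^{(0)}(\A)$ if and only if $\inc_\A(c_1,\dots,c_n)=0$ over $\Z_m$, and together with $c_1=0$ this is exactly the membership condition for $\PS_0(\A)$.

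For surjectivity, given $\x=(x_1,\dots,x_n)\in\PS_0(\A)$, the matrix $D=\diag(\omega^{x_1},\dots,\omega^{x_n})$ satisfies $d_1=\omega^{x_1}=1$, and by the equivalence just established it lies in $\Dg^{(0)}(\A)$ with $\Phi(D)=\x$. Thus $\Phi$ is a $\Z_m$-module isomorphism, and composing with the isomorphism of Lemma~\ref{VtoD} yields $\PV_{\rho(\A)}\cong\PS_0(\A)$ as $\Z_m$-modules.

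The only non-formal step is the translation carried out in the second paragraph: one must expand the Shao product $D^{-(m-1)}\A D$ entry by entry, observe that the resulting multiplicative constraints are invariant under permuting indices (so that combinatorial symmetry lets us index them by $E(\A)$ rather than by all nonzero entries), and recognize the resulting exponent sums, read modulo $m$, as precisely the rows of the incidence matrix $\inc_\A$ acting on $(c_1,\dots,c_n)$ --- the reduction $-(m-1)\equiv1\pmod m$ being what makes these agree with $\inc_\A$ exactly. Once this dictionary is established the remaining verifications (homomorphism, injectivity, surjectivity, $\Z_m$-linearity) are immediate.
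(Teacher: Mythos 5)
Your proof is correct and takes essentially the route the paper intends: this theorem is quoted from \cite{FBH} without a new proof here, but the surrounding preliminaries (Lemma \ref{VtoD}, the observation that $D^m=\I$ on $\Dg^{(0)}(\A)$ for combinatorial symmetric $\A$, and the definition of $\PS_0(\A)$) set up exactly your argument of passing through $\Dg^{(0)}(\A)$ and translating $\A=D^{-(m-1)}\A D$ into $\inc_\A\x=0$ over $\Z_m$ via powers of a primitive $m$-th root of unity, which is also how \cite{FBH} proves it. No gaps; the key reduction $-(m-1)\equiv 1 \pmod m$ and the use of combinatorial symmetry to index constraints by $E(\A)$ are handled correctly.
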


\begin{thm}[\cite{FBH}] \label{stru}
Let $\A$ be a nonnegative combinatorial symmetric weakly irreducible tensor of order $m$ and dimension $n$.
Suppose that $\inc_\A$ has a Smith normal form over $\Z_m$ as in (\ref{smith}).
Then $1 \le r \le n-1$, and
\begin{equation}\label{struEQ}
\PS_0(\A) \cong \oplus_{i, d_i \ne 1} \Z_{d_i} \oplus (n-1-r)\Z_m.
\end{equation}
\end{thm}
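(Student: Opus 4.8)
The plan is to realise $\PS_0(\A)$ inside the kernel of $\inc_\A$ acting on $\Z_m^n$, read that kernel off from the Smith normal form, and then account for the normalisation $x_1=0$. So first I would view $\inc_\A$ as a homomorphism $\Z_m^n\to\Z_m^{|E(\A)|}$ and use (\ref{smith}), written as $P\inc_\A Q=\diag(d_1,\dots,d_r,0,\dots,0)$ with $P,Q$ invertible over $\Z_m$, to get $\ker(\inc_\A)=Q\cdot\ker(\diag(d_1,\dots,d_r,0,\dots,0))$. The $i$-th diagonal constraint $d_iy_i=0$ in $\Z_m$ cuts out $\tfrac{m}{d_i}\Z_m\cong\Z_{d_i}$ because $d_i\mid m$, while the last $n-r$ coordinates are free, so
\[\ker(\inc_\A)\cong\bigoplus_{i=1}^{r}\Z_{d_i}\oplus(n-r)\Z_m\cong\bigoplus_{i:\,d_i\neq1}\Z_{d_i}\oplus(n-r)\Z_m,\]
where we used $\Z_1=0$.

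Next, the crucial observation is that the all-ones vector $\mathbf 1$ lies in $\ker(\inc_\A)$ over $\Z_m$: by the definition of the incidence matrix, $\sum_j b_{e,j}=|\{k\in[m]\}|=m$ for every $e=(i_1,\dots,i_m)$, so $\inc_\A\mathbf 1=m\mathbf 1\equiv0$. Then the map $\pi:\ker(\inc_\A)\to\Z_m$, $\x\mapsto x_1$, has kernel exactly $\PS_0(\A)$, is surjective since $\pi(\mathbf 1)=1$, and admits the section $a\mapsto a\mathbf 1$; hence $\ker(\inc_\A)=\PS_0(\A)\oplus\Z_m\mathbf 1\cong\PS_0(\A)\oplus\Z_m$. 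Since $\PS_0(\A)\subseteq\Z_m^n$ is finite, comparing with the display above and cancelling the common $\Z_m$ summand (legitimate by uniqueness of the decomposition of a finite abelian group into cyclic groups of prime-power order) yields $\PS_0(\A)\cong\bigoplus_{i:\,d_i\neq1}\Z_{d_i}\oplus(n-1-r)\Z_m$.

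It remains to check $1\le r\le n-1$. If $r=n$ then $\ker(\inc_\A)\cong\bigoplus_{i=1}^{n}\Z_{d_i}$ has exponent $\mathrm{lcm}(d_1,\dots,d_n)=d_n\le m-1$, contradicting that $\mathbf 1$ has order $m$ in $\Z_m^n$; this also guarantees $n-1-r\ge0$, so the final formula makes sense. For $r\ge1$: being weakly irreducible (with $n\ge2$), $\A$ is not diagonal, so some $e\in E(\A)$ has two distinct coordinates, and the corresponding row of $\inc_\A$ has entries in $\{0,\dots,m-1\}$ with at least two of them nonzero, hence is nonzero mod $m$; thus $\inc_\A\not\equiv0$ and $r\ge1$.

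The Smith-normal-form bookkeeping and the cancellation step are routine. The one genuinely load-bearing point is the row-sum identity $\sum_j b_{e,j}=m$: it both places $\mathbf 1$ in $\ker(\inc_\A)$ over $\Z_m$ — giving the clean $\Z_m$ factor that splits off under $x_1=0$ — and forces $r\le n-1$; that is the step I expect to require the most care to set up correctly.
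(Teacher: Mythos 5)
Your proof is correct; the paper states Theorem \ref{stru} without proof (citing \cite{FBH}), and your argument --- computing $\ker(\inc_\A)$ over $\Z_m$ from the Smith normal form, using the row-sum identity $\sum_j b_{e,j}=m$ to split off the $\Z_m$ generated by the all-ones vector under the normalization $x_1=0$, and then cancelling, with the order-$m$ element $\mathbf{1}$ forcing $r\le n-1$ and weak irreducibility forcing $r\ge 1$ --- is essentially the argument of the cited source. The only point worth flagging is the implicit assumption $n\ge 2$ (forced anyway by the conclusion $1\le r\le n-1$), which you already acknowledge when using weak irreducibility to produce an edge with two distinct indices.
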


%For a general nonnegative weakly irreducible tensor $\A$, it was proved that $\PV_{\rho(\A)}$ is finite.

\begin{thm}[\cite{FHB2}]\label{fint}
Let $\A$ be a weakly irreducible nonnegative tensor.
Then $\PV_{\rho(\A)}$ has dimension zero, i.e. there are finite many eigenvectors of $\A$ corresponding to $\rho(\A)$ up to a scalar.
\end{thm}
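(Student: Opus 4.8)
The plan is to encode $\PV_{\rho(\A)}$ as the solution set of a homogeneous system of linear congruences modulo $2\pi$, and then use weak irreducibility to force that solution set to be finite. \textbf{Step 1 (normalization via Perron--Frobenius).} By the Perron--Frobenius theory for weakly irreducible nonnegative tensors \cite{CPZ,FGH,YY3} together with \cite[Theorem 3.7]{YY3}, $\rho(\A)$ is an eigenvalue of $\A$ with a positive Perron eigenvector $\v_p$, and every $\y\in\PV_{\rho(\A)}$ satisfies $|\y|=\v_p$ after a scalar rescaling; in particular each coordinate $y_i$ is nonzero, so we may write $y_i=(v_p)_i\,e^{\sqrt{-1}\,\theta_i}$ with $\theta_i\in\R/2\pi\Z$, the angles being unique up to adding a common constant (the residual projective freedom). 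I normalize by setting $\theta_1=0$.

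\textbf{Step 2 (reduction to phase congruences).} Taking the modulus in the $i$-th coordinate of $\A\y^{m-1}=\rho(\A)\y^{[m-1]}$ and using $|\y|=\v_p$ gives
\[
\Bigl|\,\Sum_{i_2,\dots,i_m}a_{ii_2\cdots i_m}\,y_{i_2}\cdots y_{i_m}\,\Bigr|
=\Sum_{i_2,\dots,i_m}a_{ii_2\cdots i_m}\,(v_p)_{i_2}\cdots(v_p)_{i_m},
\]
which is exactly the equality case of the triangle inequality for a sum of complex numbers with nonnegative coefficients; since each summand $y_{i_2}\cdots y_{i_m}$ is nonzero, they must all carry the common argument of the nonzero right-hand side $\rho(\A)\,y_i^{m-1}$, namely $(m-1)\theta_i$. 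Conversely, phases obeying these relations make the $i$-th coordinate of $\A\y^{m-1}$ equal to $e^{\sqrt{-1}(m-1)\theta_i}(\A\v_p^{m-1})_i=\rho(\A)\,y_i^{m-1}$, so the description is an if-and-only-if. Hence, letting $M$ be the $N\times n$ integer matrix with one row for each of the $N$ nonzero entries $a_{ii_2\cdots i_m}$ of $\A$, that row carrying $-(m-1)$ in column $i$ and $+1$ in each of the columns $i_2,\dots,i_m$ counted with multiplicity, we obtain a bijection
\[
\PV_{\rho(\A)}\ \longleftrightarrow\ \bigl\{\theta\in(\R/2\pi\Z)^n:\ M\theta=0,\ \theta_1=0\bigr\}.
\]

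\textbf{Step 3 (weak irreducibility removes positive dimension).} Every row of $M$ sums to $0$, so $\mathbf 1:=(1,\dots,1)^{\top}\in\ker_\R M$; I claim this spans the whole real kernel. Indeed, if $M\theta=0$ over $\R$, pick $j_0$ with $\theta_{j_0}=\max_j\theta_j$. As $\A$ is weakly irreducible and $n\ge 2$, the vertex $j_0$ has an out-neighbour in the associated digraph (see \cite{CPZ,FGH}), i.e.\ there is a nonzero $a_{j_0 i_2\cdots i_m}$ with some $i_k\neq j_0$, and the equation $\theta_{i_2}+\cdots+\theta_{i_m}=(m-1)\theta_{j_0}$ combined with $\theta_{i_k}\le\theta_{j_0}$ forces $\theta_{i_k}=\theta_{j_0}$ for all $k$. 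Thus the set of coordinates attaining the maximum is closed under passing to out-neighbours, and strong connectivity of the digraph propagates it to all of $[n]$; so $\theta$ is constant and $\ker_\R M=\R\mathbf 1$. Hence $\rank_\R M=n-1$, and because the columns of $M$ satisfy just the single relation that their sum vanishes, deleting column $1$ yields a matrix $\bar M$ of full column rank $n-1$, i.e.\ injective over $\R$. Identifying the solution set of Step 2 with $\{\bar\theta\in(\R/2\pi\Z)^{n-1}:\bar M\bar\theta=0\}$, it equals $L/2\pi\Z^{n-1}$, where $L=\bar M^{-1}(2\pi\Z^{N})$ is a discrete subgroup of $\R^{n-1}$ (as $\bar M$ is injective) containing $2\pi\Z^{n-1}$ (as $\bar M$ is integral), hence a lattice of the full rank $n-1$ of finite index over $2\pi\Z^{n-1}$. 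Therefore $\PV_{\rho(\A)}$ is finite, i.e.\ zero-dimensional. One could equally run this congruence analysis inside the group $\Dg^{(0)}(\A)$ and then invoke $s(\A)=|\PV_{\rho(\A)}|$ from Lemma \ref{VtoD}.

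The step I expect to require the most care is Step 2: one must verify that the stated correspondence is a bijection of the projective variety rather than merely a description of its closed points, and that passing through absolute values loses nothing, so that in particular $\PV_{\rho(\A)}$ is genuinely a reduced zero-dimensional scheme and not only a finite set of points. Step 3 is the familiar maximum-propagation argument for strongly connected digraphs; its only delicate points are the trivial case $n=1$ and the observation that weak irreducibility does supply, for every vertex, an out-neighbour distinct from the vertex itself, so that the maximum actually spreads.
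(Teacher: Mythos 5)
Your argument is correct, but note that the paper does not prove Theorem \ref{fint} at all: it is imported verbatim from the reference \cite{FHB2}, so there is no internal proof to compare against. What you have written is a legitimate self-contained proof: Step 1 is exactly the normalization the paper itself extracts from \cite[Theorem 3.7]{YY3} (every $\y\in\PV_{\rho(\A)}$ has $|\y|=\v_p$ after scaling), Step 2 correctly converts the equality case of the triangle inequality into the linear phase congruences $\theta_{i_2}+\cdots+\theta_{i_m}\equiv(m-1)\theta_i \pmod{2\pi}$ for each nonzero entry (and the converse direction is checked, so it is a genuine bijection of point sets), and Step 3's maximum-propagation argument for the strongly connected digraph of $\A$ correctly pins down the real kernel of $M$ as $\R\mathbf 1$, after which the lattice-index argument $2\pi\Z^{n-1}\subseteq L=\bar M^{-1}(2\pi\Z^N)$ with $\bar M$ injective gives finiteness. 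Conceptually this is the additive (logarithmic) version of the machinery the paper does develop: the cited source \cite{FHB2} works with the diagonal group $\Dg^{(0)}(\A)$ and the identification $s(\A)=|\PV_{\rho(\A)}|$ of Lemma \ref{VtoD}, i.e.\ with the multiplicative equations $d_{i_2}\cdots d_{i_m}=d_i^{m-1}$, while Theorems \ref{VtoS}--\ref{stru} solve the same congruences over $\Z_m$ via the incidence matrix, but only under combinatorial symmetry (which forces $D^m=\I$). Your lattice argument buys exactly what the general theorem needs: it dispenses with symmetry, since the phases need not be $m$-th roots of unity. Two small remarks: your closing worry about reducedness/scheme structure is moot, because $\PV_{\rho(\A)}$ is defined in the paper purely as a subset of $\mathbb{P}^{n-1}$, so "dimension zero" means precisely "nonempty finite set of points," which you have established (nonemptiness coming from $\v_p$ itself); and in Step 3 the separate claim that $j_0$ has an out-neighbour is not even needed, since the closure of the maximum set under out-neighbours plus strong connectivity already forces $\theta$ to be constant, with the cases $n=1$ (trivial) and $\rho(\A)>0$ (automatic from weak irreducibility when $n\ge2$) disposed of as you indicate.
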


\section{$Z$-tensors, Laplacian tensors and signless Laplacian tensors}
%By definition, the stabilizing index $s(\A)$ is independent of the diagonal entries of the tensor $\A$.

\begin{lem}\label{leH}
Let $\A=s \I -\B$ be a $Z$-tensor, where $s >0$ and $\B \ge 0$.
Then
\begin{enumerate}
\item  $\lamin(\A)=s-\rho(\B)$, which is the eigenvalue of $\A$ with the least real part.

\item $\PV_{\lamin(\A)}=\PV_{\rho(\B)}$.

\item $\Dg^{(0)}(\A)=\Dg^{(0)}(\B)$, $s(\A)=s(\B)$.

\item For any diagonal tensor $\D$, $s(\A)=s(\D+\A)$.
\end{enumerate}
\end{lem}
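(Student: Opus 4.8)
The plan is to exploit the linear identity $\A\x^{m-1}=s\,\x^{[m-1]}-\B\x^{m-1}$ for every $\x\in\C^n$, which converts eigenequations for $\A$ into eigenequations for $\B$. From it, $\A\x^{m-1}=\la\x^{[m-1]}$ holds if and only if $\B\x^{m-1}=(s-\la)\x^{[m-1]}$, so $\la\in\Spec(\A)$ exactly when $s-\la\in\Spec(\B)$, with the very same eigenvectors; in particular H-eigenvalues of $\A$ correspond to H-eigenvalues of $\B$. For part (1), the Perron--Frobenius theorem for nonnegative tensors gives that $\rho(\B)$ is an eigenvalue of $\B$ with a nonzero nonnegative eigenvector, hence $s-\rho(\B)$ is an H-eigenvalue of $\A$; and for any eigenvalue $\la$ of $\A$ we have $|s-\la|\le\rho(\B)$, whence the real part of $\la$ is at least $s-\rho(\B)$. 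Since every H-eigenvalue is in particular a real eigenvalue, $\lamin(\A)=s-\rho(\B)$, and this is simultaneously the eigenvalue of $\A$ of smallest real part. Part (2) is then immediate: $\x\in\PV_{\lamin(\A)}(\A)$ iff $\A\x^{m-1}=(s-\rho(\B))\x^{[m-1]}$ iff $\B\x^{m-1}=\rho(\B)\x^{[m-1]}$ iff $\x\in\PV_{\rho(\B)}(\B)$, so $\PV_{\lamin(\A)}=\PV_{\rho(\B)}$.

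For part (3), I would compute $D^{-(m-1)}\A D=s\,(D^{-(m-1)}\I D)-D^{-(m-1)}\B D$ and observe that $D^{-(m-1)}\I D=\I$ for every invertible diagonal $D$ (the only nonzero entries of $\I$ lie on the diagonal, where the scaling contributes the factor $d_i^{-(m-1)}d_i^{m-1}=1$). Hence $\A=D^{-(m-1)}\A D$ if and only if $\B=D^{-(m-1)}\B D$, so $\Dg^{(0)}(\A)=\Dg^{(0)}(\B)$ as sets and $s(\A)=s(\B)$. For part (4), the key remark is that membership of $D$ in $\Dg^{(0)}(\mathcal{C})$ imposes no condition on the diagonal entries of $\mathcal{C}$: for $i_1=\cdots=i_m=i$ the requirement $(D^{-(m-1)}\mathcal{C}D)_{i\ldots i}=c_{i\ldots i}$ reads $d_i^{-(m-1)}c_{i\ldots i}d_i^{m-1}=c_{i\ldots i}$, which is automatic. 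Thus $\Dg^{(0)}(\mathcal{C})$ depends only on the off-diagonal entries of $\mathcal{C}$, indeed only on their support, consistent with \cite[Lemma 2.6]{FBH}. Since $\D+\A$ has exactly the same off-diagonal entries as $\A$, we conclude $\Dg^{(0)}(\D+\A)=\Dg^{(0)}(\A)$ and $s(\D+\A)=s(\A)$; alternatively one may write $\D+\A=s'\I-\B'$ with $s'$ large enough that $\B'\ge 0$ and then apply part (3) together with the preceding remark to $\B'$ and $\B$.

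I do not expect a serious obstacle. The two points that need care are: (a) invoking the correct form of the Perron--Frobenius theorem so that $\rho(\B)$ is genuinely attained as an H-eigenvalue of $\B$, which is what upgrades $s-\rho(\B)$ from a mere lower bound for the real parts of the spectrum of $\A$ to the actual value of $\lamin(\A)$; and (b) the clean bookkeeping that $D^{-(m-1)}\I D=\I$ and that diagonal entries are irrelevant to membership in $\Dg^{(0)}$, which is precisely what lets parts (3) and (4) go through with no extra hypotheses on $\A$.
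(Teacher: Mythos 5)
Your proposal is correct and follows essentially the same route as the paper: the paper disposes of part (1) by citing the argument of \cite[Theorem 3.3]{ZQZ} (which is exactly your Perron--Frobenius attainment of $\rho(\B)$ plus the bound $|s-\la|\le\rho(\B)$), and treats (2)--(4) as immediate from the correspondence $\A\x^{m-1}=\la\x^{[m-1]}\Leftrightarrow\B\x^{m-1}=(s-\la)\x^{[m-1]}$ and the definition of $\Dg^{(0)}$, which you have simply written out in detail (including the correct observations that $D^{-(m-1)}\I D=\I$ and that diagonal entries impose no constraint on membership in $\Dg^{(0)}$).
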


\begin{proof}
The first result follows by a similar discussion to \cite[Theorem 3.3]{ZQZ}.
Obviously, $\x$ is an eigenvector of $\A$ associated with $\lamin(\A)$ if and only if it is an eigenvector of $\B$ associated with $\rho(\B)$.
So the second result follow.
The last two results follow from the definition.
\end{proof}

\begin{lem}\label{plusD}
Let $\A$ be a weakly irreducible $Z$-tensor. Then
\begin{enumerate}
\item $\PV_{\lamin(\A)}$ is finite, i.e. there are finitely many eigenvectors of $\A$ associated with $\lamin(\A)$.

\item $\PV_{\lamin(\A)}$ is an abelian group isomorphic to $\Dg^{(0)}(\A)$.

\item $s(\A)=|\PV_{\lamin(\A)}|$.

\end{enumerate}
\end{lem}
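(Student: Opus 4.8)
The plan is to reduce all three assertions to the theory of nonnegative weakly irreducible tensors already recorded in Section 2, by writing $\A=s\I-\B$ with $s>0$ and $\B\ge 0$ as in Definition \ref{ZM}, and transporting everything across this decomposition via Lemma \ref{leH}.

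First I would check that $\B$ is weakly irreducible. This is immediate: the identity tensor $\I$ has nonzero entries only at index tuples with all coordinates equal, so subtracting $s\I$ from $s\I-\B$ changes only the all-equal (diagonal) entries and leaves the positions of the off-diagonal nonzero entries untouched. Since weak irreducibility depends only on those off-diagonal positions (it is the strong connectedness of the associated directed graph on $[n]$), $\A$ and $\B$ are simultaneously weakly irreducible. Hence $\B$ is a nonnegative weakly irreducible tensor, and Lemma \ref{leH} supplies the dictionary $\PV_{\lamin(\A)}=\PV_{\rho(\B)}$, $\Dg^{(0)}(\A)=\Dg^{(0)}(\B)$, and $s(\A)=s(\B)$.

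With this dictionary, each part is a one-line citation. For (1), Theorem \ref{fint} applied to $\B$ says $\PV_{\rho(\B)}$ has dimension zero, i.e. it is a finite subset of $\mathbb{P}^{n-1}$; since $\PV_{\lamin(\A)}=\PV_{\rho(\B)}$, the eigenvariety $\PV_{\lamin(\A)}$ is finite. For (2), Lemma \ref{VtoD}(2) applied to $\B$ gives that $(\PV_{\rho(\B)},\circ)$ is an abelian group isomorphic to $\Dg^{(0)}(\B)$; substituting $\PV_{\lamin(\A)}=\PV_{\rho(\B)}$ and $\Dg^{(0)}(\A)=\Dg^{(0)}(\B)$ yields that $\PV_{\lamin(\A)}$ is an abelian group isomorphic to $\Dg^{(0)}(\A)$. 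For (3), Lemma \ref{VtoD}(1) applied to $\B$ gives $s(\B)=|\PV_{\rho(\B)}|$, whence $s(\A)=s(\B)=|\PV_{\rho(\B)}|=|\PV_{\lamin(\A)}|$.

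There is no real obstacle in this argument; it is entirely bookkeeping on top of Section 2 and Lemma \ref{leH}. The single point deserving a sentence of justification is the transfer of weak irreducibility from $\A$ to $\B$, which, as noted, is clear because passing from $\A$ to $\B=s\I-\A$ only alters diagonal entries and therefore preserves the graph governing weak irreducibility.
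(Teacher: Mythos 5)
Your proposal is correct and follows essentially the same route as the paper: decompose $\A=s\I-\B$, transfer everything via Lemma \ref{leH}, and invoke Theorem \ref{fint} for finiteness and Lemma \ref{VtoD} for the group structure and the count. Your extra remark that $\B$ inherits weak irreducibility because only diagonal entries change is a point the paper leaves implicit, and it is a welcome clarification.
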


\begin{proof}
Suppose $\A=s \I -\B$, where $s>0$ and $\B \ge 0$.
Then $\PV_{\lamin(\A)}=\PV_{\rho(\B)}$ by Lemma \ref{leH}.
As $\B$ is nonnegative and weakly irreducible, the finiteness of $\PV_{\lamin(\A)}$ follows from Theorem \ref{fint}.
By Lemma \ref{VtoD}, $\PV_{\rho(\B)}$, as well as $\PV_{\lamin(\A)}$, is an abelian group isomorphic to $\Dg^{(0)}(\B)$, which is equal to $\Dg^{(0)}(\A)$ by Lemma \ref{leH}.
\end{proof}

\begin{thm}\label{struZ}
Let $\A$ be a combinatorial symmetric weakly irreducible $Z$-tensor of order $m$ and dimension $n$.
Suppose that $\inc_\A$ has a Smith normal form over $\Z_m$ as in (\ref{smith}).
Then
\begin{enumerate}
\item $1 \le r \le n-1$.

\item $\PV_{\lamin(\A)}$ is a $\Z_m$-module with the decomposition
\[\PV_{\lamin(\A)} \cong \oplus_{i, d_i \ne 1} \Z_{d_i} \oplus (n-1-r)\Z_m.\]

\item  $s(\A)=|\PV_{\lamin(\A)}|=m^{n-1-r} \Pi_{i=1}^r d_i$.
\end{enumerate}

\end{thm}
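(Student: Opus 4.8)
The plan is to reduce the statement to the nonnegative, combinatorial symmetric case already settled in Theorems \ref{VtoS} and \ref{stru}. Write $\A = s\I - \B$ with $s > 0$ and $\B \ge 0$, as in Definition \ref{ZM}. First I would check that $\B$ inherits every hypothesis needed: it is nonnegative by construction; it is weakly irreducible because weak irreducibility is determined by the off-diagonal support, which $\A$ and $\B$ share (this is already used in the proof of Lemma \ref{plusD}); and it is combinatorial symmetric because $\sp(\B)$ and $\sp(\A)$ can differ only at diagonal positions $(i,i,\ldots,i)$, and such positions are automatically invariant under permutations of the indices, so $\sp(\B)$ is symmetric iff $\sp(\A)$ is. By Lemma \ref{leH} we then have $\PV_{\lamin(\A)} = \PV_{\rho(\B)}$ and $s(\A) = s(\B)$, and $\PV_{\lamin(\A)}$ acquires the structure of a $\Z_m$-module through this identification together with the $\Z_m$-module structure on $\PV_{\rho(\B)}$ described before Theorem \ref{VtoS}.

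The one genuinely new point is to match the Smith normal form of $\inc_\A$ with that of $\inc_\B$ over $\Z_m$. Here I would observe that any diagonal tuple $(i,i,\ldots,i)$ lying in $E(\A)$ (or in $E(\B)$) contributes to the incidence matrix exactly the row $m\, e_i^{\top}$, which is the zero row over $\Z_m$. Since $a_{i_1\ldots i_m}=-b_{i_1\ldots i_m}$ for every non-constant tuple, $E(\A)$ and $E(\B)$ coincide on all non-diagonal tuples; hence $\inc_\A$ and $\inc_\B$ agree over $\Z_m$ up to adjoining or deleting zero rows, and therefore have the same Smith normal form over $\Z_m$. In particular the integers $r$ and $d_1 \mid \cdots \mid d_r$ appearing in (\ref{smith}) for $\inc_\A$ are the same as those for $\inc_\B$.

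With these two observations in hand, the theorem follows by transport. Applying Theorem \ref{stru} to the nonnegative combinatorial symmetric weakly irreducible tensor $\B$ of order $m$ and dimension $n$ gives $1 \le r \le n-1$ together with $\PS_0(\B) \cong \oplus_{i, d_i \ne 1} \Z_{d_i} \oplus (n-1-r)\Z_m$, where $r$ and the $d_i$ come from the Smith normal form of $\inc_\B$, hence of $\inc_\A$, over $\Z_m$. By Theorem \ref{VtoS}, $\PV_{\rho(\B)}$ is $\Z_m$-module isomorphic to $\PS_0(\B)$, and combining with $\PV_{\lamin(\A)} = \PV_{\rho(\B)}$ yields parts (1) and (2). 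For part (3), Lemma \ref{leH}(3) and Lemma \ref{VtoD}(1) give $s(\A) = s(\B) = |\PV_{\rho(\B)}| = |\PV_{\lamin(\A)}|$, and the order of the finite abelian group $\oplus_{i, d_i \ne 1} \Z_{d_i} \oplus (n-1-r)\Z_m$ is $\prod_{d_i \ne 1} d_i \cdot m^{n-1-r} = m^{n-1-r}\prod_{i=1}^{r} d_i$, since the factors with $d_i = 1$ are trivial. The only step demanding care is the incidence-matrix bookkeeping of the second paragraph; everything else is a direct translation of results already established.
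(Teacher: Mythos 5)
Your proof is correct and follows essentially the same route as the paper: reduce to $\B$ via $\A = s\I - \B$, transport the $\Z_m$-module structure through $\PV_{\lamin(\A)} = \PV_{\rho(\B)}$ (Lemma \ref{leH}), and invoke Theorems \ref{VtoS} and \ref{stru} together with Lemmas \ref{VtoD} and \ref{plusD} for the count. Your explicit bookkeeping that diagonal tuples contribute only rows $m\,e_i^{\top}$, i.e.\ zero rows over $\Z_m$, so that $\inc_\A$ and $\inc_\B$ share the same invariant factors, is a more careful justification of the step the paper simply asserts as $\inc_\A = \inc_\B$ over $\Z_m$.
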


\begin{proof}
Let $\A=s \I -\B$, where $s>0$ and $\B \ge 0$.
Note that $\B$ is combinatorial symmetric and weakly irreducible,
$\A$ and $\B$ have the same incidence matrices, i.e. $\inc_\A=\inc_\B$ over $\Z_m$.
So, by Theorem \ref{stru}, $1 \le r \le n-1$.
By Theorem \ref{VtoS},  $\PV_{\rho(\B)}$ is $\Z_m$-module isomorphic to $\PS_0(\B)$, which has a decomposition as in (\ref{struEQ}).
As $\PV_{\lamin(\A)}=\PV_{\rho(\B)}$ by Lemma \ref{leH}, the second result follows.
The last result follows from (2) and Lemma \ref{plusD}.
\end{proof}

%Next we consider the Laplacian tensor of a uniform hypergraph.

\begin{lem}
The Laplacian tensor $\L(G)$ of a uniform hypergraph $G$ is a singular $M$-tensor.
\end{lem}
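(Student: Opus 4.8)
The statement to prove is that the Laplacian tensor $\L(G) = \D(G) - \A(G)$ of a uniform hypergraph $G$ is a singular $M$-tensor. The plan is to verify the two requirements separately: first that $\L(G)$ is an $M$-tensor, and second that it is singular (i.e., that $s = \rho(\B)$ rather than $s > \rho(\B)$ in the defining decomposition).

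First I would exhibit $\L(G)$ in the canonical $Z$-tensor form \eqref{ZMe}. Let $\Delta$ denote the maximum degree of $G$ and set $s = \Delta$ (if $\Delta = 0$ the hypergraph is edgeless and the claim is trivial, so assume $\Delta > 0$). Write $\L(G) = s\I - \B$ where $\B = s\I - \D(G) + \A(G)$. Since the diagonal entries of $s\I - \D(G)$ are $\Delta - d_{v_i} \ge 0$ and the off-diagonal entries of $\A(G)$ are nonnegative (they are $\frac{1}{(m-1)!}$ or $0$), the tensor $\B$ is nonnegative; hence $\L(G)$ is indeed a $Z$-tensor. To conclude it is an $M$-tensor it suffices to show $s \ge \rho(\B)$.

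For the bound $\rho(\B) \le s$, I would use the all-ones vector $\mathbf 1 \in \R^n$. A direct computation gives $(\A(G)\mathbf 1^{m-1})_i = d_{v_i}$, so $(\D(G)\mathbf 1^{m-1})_i = d_{v_i} = (\A(G)\mathbf 1^{m-1})_i$, whence $\L(G)\mathbf 1^{m-1} = 0$ and therefore $\B\mathbf 1^{m-1} = s\,\mathbf 1^{[m-1]} = s\,\mathbf 1$. Since $\B$ is nonnegative, the Perron--Frobenius theory of nonnegative tensors (or the Collatz--Wielandt-type characterization, cf.\ the references \cite{YY1,YY2,YY3} cited in the introduction) gives $\rho(\B) = \max_i (\B\mathbf 1^{m-1})_i / 1 = s$ when $\B$ is weakly irreducible, and in general $\rho(\B) \le \max_i (\B\mathbf 1^{m-1})_i = s$. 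Either way $s \ge \rho(\B)$, so $\L(G)$ is an $M$-tensor, and in fact the displayed equality $\rho(\B) = s$ already shows it is \emph{not} a strong (nonsingular) $M$-tensor.

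Finally, for singularity I would observe that $\mathbf 1$ is an eigenvector: $\L(G)\mathbf 1^{m-1} = 0 = 0 \cdot \mathbf 1^{[m-1]}$, so $0$ is an H-eigenvalue of $\L(G)$. By Lemma \ref{leH}(1), $\lamin(\L(G)) = s - \rho(\B) = 0$, so $0$ is the least H-eigenvalue; in particular $\L(G)$ has $0$ as an eigenvalue and is therefore singular (it is not a nonsingular $M$-tensor). The only point needing a little care is the passage from the eigenvalue equation on $\mathbf 1$ to the value of $\rho(\B)$: if $G$ is connected then $\B$ is weakly irreducible and the positive Perron eigenvector argument is immediate, whereas for disconnected $G$ one should either reduce to connected components or simply invoke the general upper bound $\rho(\B) \le \max_i(\B\mathbf 1^{m-1})_i$ together with the fact that $s$ is attained as an eigenvalue on $\mathbf 1$, forcing $\rho(\B) = s$. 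This case distinction is the only mild obstacle; the rest is routine.
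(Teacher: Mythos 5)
Your proof is correct and takes essentially the same route as the paper: the paper gets the $M$-tensor property by citing diagonal dominance (\cite[Theorem 3.15]{ZQZ}), which is exactly the row-sum computation you carry out explicitly with the all-ones vector, and the singularity argument (the all-ones vector shows $0$ is the least H-eigenvalue) is identical. Your extra care about disconnected $G$ via the general row-sum bound is fine but not needed for the paper's citation-based version.
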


\begin{proof}
As $\L(G)$ is diagonal dominant, $\L(G)$ is an $M$-tensor by \cite[Theorem 3.15]{ZQZ}.
Note that $0$ is the least $H$-eigenvalue of $\L(G)$ associated with an all-ones eigenvector.
So $\L(G)$ is singular.
\end{proof}

\begin{thm}\label{numL}
Let $G$ be a connected $m$-uniform hypergraph.
Suppose that $\inc_G$ has a Smith normal form over $\Z_m$ as in (\ref{smith}).
Then $$|\PV_0(\L(G))|=s(\L(G))=s(\A(G))=m^{n-1-r} \Pi_{i=1}^r d_i.$$
\end{thm}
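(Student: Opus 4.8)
The plan is to reduce everything to Theorem \ref{struZ} applied to the $Z$-tensor $\L(G)$, together with the equality $s(\L(G))=s(\A(G))$ coming from Lemma \ref{leH}. First I would observe that $\L(G)=\D(G)-\A(G)$ is a combinatorial symmetric $Z$-tensor: its off-diagonal entries are $-a_{i_1\cdots i_m}\le 0$, and since $\A(G)$ is combinatorial symmetric and $\D(G)$ is diagonal, $\sp(\L(G))$ is symmetric. Moreover $G$ is connected, so $\L(G)$ is weakly irreducible (as stated in the Preliminaries), and its least H-eigenvalue is $0$ with the all-ones eigenvector, so $\PV_{\lamin(\L(G))}=\PV_0(\L(G))$. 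Writing $\L(G)=s\I-\B$ with $s$ large enough that $\B=s\I-\L(G)\ge 0$ (e.g. $s=\max_i d_{v_i}$), Lemma \ref{leH}(2) gives $\PV_0(\L(G))=\PV_{\rho(\B)}$, and Lemma \ref{plusD}(3) gives $s(\L(G))=|\PV_0(\L(G))|$.

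Next I would identify the incidence matrix. By definition $\inc_{\L(G)}$ coincides with $\inc_{\A(G)}$ (the diagonal tensor $\D(G)$ contributes only entries of the form $(i,i,\dots,i)$, which are the diagonal terms excluded from $E(\cdot)$ in exactly the same way for $\A(G)$; more simply, the off-diagonal support of $\L(G)$ equals that of $\A(G)$, and $E(\cdot)$ only records the non-diagonal edges). Hence $\inc_{\L(G)}=\inc_{\A(G)}=\inc_G$, and the Smith normal form over $\Z_m$ hypothesized in the statement is exactly the one attached to $\L(G)$. Now apply Theorem \ref{struZ} to the combinatorial symmetric weakly irreducible $Z$-tensor $\L(G)$: part (1) gives $1\le r\le n-1$, and part (3) gives
\[
s(\L(G))=|\PV_{\lamin(\L(G))}|=|\PV_0(\L(G))|=m^{\,n-1-r}\,\Pi_{i=1}^r d_i.
\]

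Finally, for the equality $s(\L(G))=s(\A(G))$: write $\A(G)=s\I-\L(G)$ as a $Z$-tensor only up to sign — more directly, $\L(G)=\D(G)-\A(G)$ with $\A(G)\ge 0$, and Lemma \ref{leH}(3) (or the observation that $\Dg^{(0)}$ depends only on the support) shows $\Dg^{(0)}(\L(G))=\Dg^{(0)}(\A(G))$, whence $s(\L(G))=s(\A(G))$; equivalently one may invoke Lemma \ref{leH}(4) with $\D=\D(G)$ and $\A=-\A(G)$ viewed appropriately, but the cleanest route is that $\Dg^{(0)}$ is determined by $\sp(\cdot)$ and $\sp(\L(G))$, $\sp(\A(G))$ agree off the diagonal, while diagonal entries are irrelevant to the defining equation $\A=D^{-(m-1)}\A D$. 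Stringing these together yields the chain of equalities in the theorem. I do not anticipate a serious obstacle here; the only point requiring care is the bookkeeping that $\inc_{\L(G)}=\inc_G$ and that the stabilizing index is insensitive to the diagonal tensor $\D(G)$, both of which are essentially definitional given the machinery already assembled.
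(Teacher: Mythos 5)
Your proposal is correct and follows essentially the same route as the paper: identify $\L(G)$ as a combinatorial symmetric weakly irreducible $Z$-tensor, note $\inc_{\L(G)}=\inc_G$ over $\Z_m$, apply Theorem \ref{struZ}(3), and use the support/diagonal-insensitivity of $\Dg^{(0)}$ to get $s(\L(G))=s(\A(G))$. The only slight imprecision is your claim that $E(\cdot)$ excludes diagonal tuples --- by the paper's definition the nonzero diagonal entries of $\L(G)$ do appear in $E(\L(G))$, but the corresponding rows of the incidence matrix are $m$ times a unit vector and hence vanish over $\Z_m$, so the conclusion $\inc_{\L(G)}=\inc_G$ over $\Z_m$ stands.
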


\begin{proof}
As $G$ is connected, $\L(G)$ is a combinatorial symmetric weakly irreducible $Z$-tensor.
The incidence matrix of $\L(G)$ is same as that of $\A(G)$, i.e. $\inc_G$ over $\Z_m$.
So, by Theorem \ref{struZ}(3), $s(\L(G))=|\PV_0(\L(G))|=m^{n-1-r} \Pi_{i=1}^r d_i$.
The result now follows as $s(\L(G))=s(\A(G))$ by definition.
\end{proof}

%Finally we discuss the zero eigenvalue of the signless Laplacian tensor of a uniform hypergraph.
Let $G$ be an $m$-uniform hypergraph on $n$ vertices, where $m$ is even.
$G$ is called \emph{odd-colorable} \cite{Ni} if there exists a map
$f: V(G) \to [m]$ such that for each $e \in E(G)$,
$\sum_{v \in e} f(v)\equiv \frac{m}{2} \mod m$.

\begin{lem}[\cite{FWBWLZ}]\label{char-odd}
Let $G$ be an $m$-uniform connected hypergraph on $n$ vertices.
Then the following are equivalent.

\begin{enumerate}

\item  $0$ is an eigenvalue of $\Q(G)$.

\item  $m$ is even and $G$ is odd-colorable.

\item  $\Q(G)=D^{-(m-1)}\L(G)D$ for some diagonal matrix $D$ with $|D|=\I$.

\item  $\rho(\L(G))=\rho(\Q(G))$.
\end{enumerate}
\end{lem}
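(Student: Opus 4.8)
The plan is to prove the cyclic chain of implications $(2)\Rightarrow(3)\Rightarrow(4)\Rightarrow(1)\Rightarrow(2)$. I record at the outset the facts I will use repeatedly: $G$ connected makes $\A(G),\L(G),\Q(G)$ weakly irreducible; $\mathbf 1$ (the all-ones vector) is a $0$-eigenvector of $\L(G)$ and $\lamin(\L(G))=0$; and $|\L(G)|=\Q(G)$ entrywise, since the two tensors share the nonnegative diagonal part $\D(G)$ and have opposite off-diagonal parts. Fix a primitive $m$-th root of unity $\omega$. For $(2)\Rightarrow(3)$: given an odd coloring $f\colon V(G)\to\Z_m$, set $D=\diag(\omega^{f(v_1)},\dots,\omega^{f(v_n)})$, so $|D|=\I$; whenever $a_{i_1\cdots i_m}\ne 0$ the indices run over a permutation of an edge $e$, and
\[
-(m-1)f(v_{i_1})+\sum_{k=2}^{m}f(v_{i_k})=-m\,f(v_{i_1})+\sum_{v\in e}f(v)\equiv \tfrac m2 \pmod m ,
\]
so, using $m$ even and $\omega^{m/2}=-1$, one gets $D^{-(m-1)}\A(G)D=-\A(G)$ and hence $D^{-(m-1)}\L(G)D=\D(G)-(-\A(G))=\Q(G)$. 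For $(3)\Rightarrow(4)$: a diagonal similarity sends an eigenvector $\x$ of $\L(G)$ for $\la$ to the eigenvector $D^{-1}\x$ of $\Q(G)$ for $\la$ (the product rule of \cite{Shao}), so $\L(G)$ and $\Q(G)$ have the same spectrum, in particular the same spectral radius.

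For $(4)\Rightarrow(1)$: from $|\L(G)|=\Q(G)$ one always has $\rho(\L(G))\le\rho(\Q(G))$, and $\rho(\Q(G))$ is an H-eigenvalue of the weakly irreducible nonnegative tensor $\Q(G)$ with a positive eigenvector by Perron--Frobenius. If $\rho(\L(G))=\rho(\Q(G))$, the rigidity part of the Perron--Frobenius theory for weakly irreducible nonnegative tensors (the ``spectral symmetry'' circle of ideas behind Lemma~\ref{VtoD} and \cite{FHB}) forces $\L(G)=\xi\,D^{-(m-1)}\Q(G)D$ for some diagonal $D$ with $|D|=\I$ and some unimodular scalar $\xi$; equating diagonal entries and using $d_v>0$ for all $v$ gives $\xi=1$, so $\Q(G)=D^{-(m-1)}\L(G)D$ after renaming $D$. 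Then $\Q(G)(D^{-1}\mathbf 1)^{m-1}=0$ with $D^{-1}\mathbf 1\ne 0$, i.e.\ $0$ is an eigenvalue of $\Q(G)$.

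The step $(1)\Rightarrow(2)$ is, I expect, the crux. Let $\Q(G)\x^{m-1}=0$ with $\x\ne 0$, equivalently $\A(G)\x^{m-1}=-\D(G)\x^{[m-1]}$. Taking moduli entrywise and using $|\A(G)\x^{m-1}|\le\A(G)|\x|^{m-1}$ gives $\L(G)|\x|^{m-1}\le 0$. Writing $\L(G)=\Delta\I-\B'$ with $\Delta$ the maximum degree and $\B'=\Delta\I-\D(G)+\A(G)\ge 0$, the tensor $\B'$ is symmetric, weakly irreducible, and has Perron pair $(\Delta,\mathbf 1)$ (since $\lamin(\L(G))=0$ forces $\rho(\B')=\Delta$, while $\B'\mathbf 1^{m-1}=\Delta\mathbf 1^{[m-1]}$ by a row-sum computation), so the inequality reads $\B'|\x|^{m-1}\ge\rho(\B')|\x|^{[m-1]}$. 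Pairing with $|\x|$ and comparing with the variational value $\rho(\B')=\max_{\|\y\|_m=1,\,\y\ge 0}\B'\y^m$ (equivalently, invoking the Collatz--Wielandt rigidity for weakly irreducible nonnegative tensors) forces $\B'|\x|^{m-1}=\rho(\B')|\x|^{[m-1]}$ and $|\x|>0$, so $|\x|$ is a positive multiple of $\mathbf 1$; rescale $\x$ so that $|x_v|=1$ for all $v$ and $x_1=1$. Each coordinate of $\A(G)\x^{m-1}=-\D(G)\x^{[m-1]}$ then says that the sum of the $d_{v_i}$ unimodular numbers $\prod_{v\in e\setminus\{v_i\}}x_v$ over edges $e\ni v_i$ has modulus $d_{v_i}$; saturation of the triangle inequality makes these terms all share the argument of $-x_i^{m-1}$, i.e.\ $\sum_{v\in e}\arg x_v\equiv\pi+m\arg x_{v_i}\pmod{2\pi}$ for every edge $e$ and every $v_i\in e$. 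Comparing two vertices of $e$ yields $x_u^m=x_w^m$ for $u,w\in e$, so $x_v^m$ is constant by connectivity, equal to $1$ since $x_1=1$; hence $x_v=\omega^{f(v)}$ for some $f(v)\in\Z_m$ and the edge relation becomes $\sum_{v\in e}f(v)\equiv \tfrac m2\pmod m$. In particular $\tfrac m2\in\Z$, so $m$ is even (for odd $m$, $2\sum_{v\in e}f(v)\equiv m\pmod{2m}$ has no solution, contradicting that $G$ has an edge), and $f$ is an odd coloring, which is $(2)$.

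The genuine difficulty lies in the two appeals to Perron--Frobenius rigidity: the ``spectral symmetry'' statement in $(4)\Rightarrow(1)$ and the Collatz--Wielandt/variational rigidity in $(1)\Rightarrow(2)$. Both are facets of the weakly irreducible Perron--Frobenius theory for nonnegative tensors already invoked in this paper (Lemma~\ref{VtoD}, \cite{YY3,FHB}); granting these, what remains is the bookkeeping of phase congruences modulo $m$, where the only subtle point is carrying the exponent $m$ (not $m-1$) through the relation $\sum_{v\in e}\arg x_v\equiv\pi+m\arg x_{v_i}$ and observing that the normalization $x_1=1$ removes the global constant.
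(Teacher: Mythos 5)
This lemma is quoted from \cite{FWBWLZ} and the paper gives no internal proof, so there is nothing in the source to compare against line by line; judged on its own, your proof is correct and follows the natural route that the cited reference also takes. The cycle $(2)\Rightarrow(3)\Rightarrow(4)\Rightarrow(1)\Rightarrow(2)$ is sound: the phase computation $-(m-1)f(v_{i_1})+\sum_{k\ge 2}f(v_{i_k})\equiv \frac m2 \pmod m$ correctly yields $D^{-(m-1)}\A(G)D=-\A(G)$ while fixing the diagonal part; similarity invariance of the spectrum gives $(3)\Rightarrow(4)$; and the saturation-of-triangle-inequality analysis in $(1)\Rightarrow(2)$, giving $\prod_{v\in e}x_v=-x_u^m$ for every $u\in e$, hence $x_v^m$ constant, equal to $1$ after normalization, hence $m$ even and $f$ an odd coloring, is exactly right. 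The two places where you defer to Perron--Frobenius rigidity are genuine theorems in the literature this paper already leans on: the statement ``$|\B|\le\A$ with $\A$ nonnegative weakly irreducible and $\B$ having an eigenvalue of modulus $\rho(\A)$ forces $\B=e^{\i\theta}D^{-(m-1)}\A D$, $|D|=\I$'' is in \cite{YY3} and underlies \cite{FHB,FBH}, and the Collatz--Wielandt/variational rigidity you need for $\B'|\x|^{m-1}\ge\rho(\B')|\x|^{[m-1]}$ (equality, positivity, uniqueness of the Perron vector) follows from the variational characterization for symmetric nonnegative tensors together with strict monotonicity of $\rho$ under weakly irreducible domination, both available in \cite{YY1,YY3,FGH}; so flagging them as external inputs is legitimate rather than a gap. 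Two small housekeeping points: in $(4)\Rightarrow(1)$ the phase elimination $\xi=1$ uses that some degree is positive, and $(1)\Rightarrow(2)$ implicitly needs $E(G)\neq\emptyset$; both are guaranteed once $G$ is connected with $n\ge 2$, which is the implicit setting of the lemma.
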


\begin{thm}\label{numQ}
Let $G$ be an $m$-uniform connected hypergraph which is odd-colorable.
Suppose that $\inc_G$ has a Smith normal form over $\Z_m$ as in (\ref{smith}).
Then
\begin{enumerate}
\item $s(\Q(G))=s(\L(G))=s(\A(G))=m^{n-1-r} \Pi_{i=1}^r d_i$.

\item $s(\Q(G))=|\PV_{\rho(\Q(G))}|=|\PV_0(\Q(G))|=|\PV_0(\L(G))|$.
\end{enumerate}
\end{thm}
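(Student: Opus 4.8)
The plan is to obtain both parts by assembling results already in hand, using two elementary observations: that $\L(G)$, $\Q(G)$ and $\A(G)$ have the same zero pattern off the diagonal, and that odd-colorability makes $\Q(G)$ a diagonal conjugate of $\L(G)$.

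For part (1), note first that since $G$ is connected, $\Q(G)=\D(G)+\A(G)$ is a nonnegative, combinatorial symmetric, weakly irreducible tensor. Moreover $\sp(\Q(G))=\sp(\L(G))$, because the two tensors have the same entries up to signs and hence the same support. Since $\Dg^{(0)}$ is determined by the support (\cite[Lemma 2.6]{FBH}), we get $\Dg^{(0)}(\Q(G))=\Dg^{(0)}(\L(G))$ and therefore $s(\Q(G))=s(\L(G))$. Theorem~\ref{numL} supplies $s(\L(G))=s(\A(G))=m^{n-1-r}\Pi_{i=1}^{r} d_i$, which finishes part (1). (This step does not use odd-colorability.)

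For part (2), since $\Q(G)$ is nonnegative and weakly irreducible, Lemma~\ref{VtoD}(1) gives $s(\Q(G))=|\PV_{\rho(\Q(G))}|$. By Lemma~\ref{char-odd}(3), odd-colorability of $G$ produces an invertible diagonal matrix $D$ with $|D|=\I$ and $\Q(G)=D^{-(m-1)}\L(G)D$, equivalently $\L(G)=D^{m-1}\Q(G)D^{-1}$. A short computation with the general tensor product of \cite{Shao} shows that $\L(G)\x^{m-1}=\lambda\x^{[m-1]}$ implies $\Q(G)(D^{-1}\x)^{m-1}=\lambda(D^{-1}\x)^{[m-1]}$, and symmetrically with $D$ in place of $D^{-1}$; hence $[\x]\mapsto[D^{-1}\x]$ is a projective automorphism of $\mathbb{P}^{n-1}$ restricting to a bijection $\PV_\lambda(\L(G))\to\PV_\lambda(\Q(G))$ for every $\lambda$. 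In particular $|\PV_0(\Q(G))|=|\PV_0(\L(G))|$. Combining this with Theorem~\ref{numL} ($|\PV_0(\L(G))|=s(\L(G))$) and part (1) ($s(\L(G))=s(\Q(G))$) yields the chain $s(\Q(G))=|\PV_{\rho(\Q(G))}|=|\PV_0(\L(G))|=|\PV_0(\Q(G))|$, which is part (2). (Alternatively, using $\rho(\L(G))=\rho(\Q(G))$ from Lemma~\ref{char-odd}(4), one may route $|\PV_{\rho(\Q(G))}|$ through $|\PV_{\rho(\L(G))}|$ by the same conjugation.)

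The only genuine verification is the diagonal-conjugation bijection: one must check that conjugation by $D$ preserves the eigenvalue itself — not merely the eigenvector up to a scalar — and that it descends to projective space, both of which are immediate once the product is written out and one uses that $D$ is diagonal, invertible, and satisfies $|D|=\I$. Everything else is bookkeeping with \cite{FBH}, Theorem~\ref{numL} and Lemma~\ref{char-odd}; I do not expect any step to present a real obstacle, since the content of the theorem is exactly that odd-colorability places $\L(G)$ and $\Q(G)$ in a single diagonal-similarity orbit, forcing all the relevant counting invariants to coincide.
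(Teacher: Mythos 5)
Your proposal is correct and follows essentially the same route as the paper: part (1) from the fact that $\Dg^{(0)}$ (hence $s$) depends only on the off-diagonal support together with the Smith normal form count, and part (2) from Lemma \ref{VtoD} giving $s(\Q(G))=|\PV_{\rho(\Q(G))}|$ plus the diagonal conjugation of Lemma \ref{char-odd}(3) giving the bijection $\PV_0(\L(G))\to\PV_0(\Q(G))$. The only cosmetic difference is that you route $|\PV_0(\L(G))|=s(\L(G))$ through Theorem \ref{numL} while the paper cites Lemma \ref{plusD} directly, which is the same content.
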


\begin{proof}
The first result follows by definition and Theorem \ref{stru}.
By Lemma \ref{char-odd}, as $G$ is odd-colorable, $0$ is an eigenvalue of $\Q(G)$.
Also by Lemma \ref{char-odd}, there exists an invertible  diagonal matrix $D$ such that $\y$ is an eigenvector of $\Q(G)$ associated with $0$ if and only if
$D\y$ is an eigenvector of $\L(G)$ associated with $0$.
So $$|\PV_0(\Q(G))|=|\PV_0(\L(G))|=s(\L(G)),$$
where the last equality follows from Lemma \ref{plusD}.
By Lemma \ref{VtoD}, $s(\Q(G))=|\PV_{\rho(\Q(G))}|$ as $\Q(G)$ is nonnegative and weakly irreducible.
The result follows.
\end{proof}

\begin{exm}
Let $K_n^{[m]}$ be a complete $m$-uniform hypergraph on $n$ vertices, where $n \ge m+1$.
By \cite[Example 4.4]{FBH}, $s(\A(K_n^{[m]})=1$, so $s(\L(K_n^{[m]}))=1$ by Theorem \ref{numL}, which implies that
$\L(K_n^{[m]})$ has only one eigenvector (the all-ones vector) associated with the zero eigenvalue.
\end{exm}

An \emph{odd bipartition} $\{V_1,V_2\}$ of $G$ is a bipartition of $V(G)$
such that each edge of $G$ intersects with $V_1$ or $V_2$ in an odd number of vertices.
$G$ is called \emph{odd-bipartite} if $G$ has an odd bipartition \cite{HQ}.
Shao et al. \cite{SSW} proved that $0$ is an $H$-eigenvalue of $\Q(G)$ if and only if
$m$ is even and $G$ is odd-bipartite.
An odd-bipartite hypergraph is odd-colorable, but the converse is not true \cite{Ni}.
A \emph{cored hypergraph} \cite{HQS} is one such that each edge contains a vertex of degree one.
Obviously a cored hypergraph of even uniformity is odd-bipartite.

\begin{exm}
Let $G$ be a connected $m$-uniform cored hypergraph on $n$ vertices with $t$ edges.
Then $s(\A(G))=m^{n-1-t}$ by \cite[Theorem 4.1]{FBH}. So $\L(G)$ has $m^{n-1-t}$ eigenvectors associated with the zero eigenvalue.
If $m$ is even, then $G$ is odd-bipartite.
So $\Q(G)$ has $m^{n-1-t}$ eigenvectors associated with the zero eigenvalue.
\end{exm}

\begin{exm}
Let $G^{m,m/2}$ be a generalized power hypergraph \cite{KF}, which is obtained from a simple graph $G$
 by blowing each vertex into an $m/2$-set and preserving the adjacency relation, where $m$ is even.
It is known that $G^{m,m/2}$ is non-odd-bipartite if and only if $G$ is non-bipartite \cite{KF}.
Suppose that $G$ is non-bipartite.
Then $\rho(\L(G^{m,m/2}))=\rho(\Q(G^{m,m/2}))$ if and only if $4$ divides $m$ \cite{FKT}.
Particularly, let $G$ be a triangle or $C_3$ and $m=4$.
Then $C_3^{4,2}$ is non-odd-bipartite but odd-colorable by Lemma \ref{char-odd}.
The incidence matrix of $C_3^{4,2}$ has invariant divisors $1,1,2$ over $\Z_4$.
So $\Q(C_3^{4,2})$ has $32$ eigenvectors associated with the zero eigenvalue.
\end{exm}

\end{document}